\theoremstyle{plain}
\newtheorem{proposition}{Proposition}
\newtheorem{remark}{Remark}
\newtheorem{lemma}{Lemma}
\newtheorem{definition}{Definition}
\newtheorem{assumption}[definition]{Assumption}
\definecolor{red}{rgb}{1.0,0.0,0.0}
\definecolor{blu}{rgb}{0.0,0.0,1.0}
\definecolor{gre}{rgb}{0.03,0.50,0.03}
\definecolor{darkviolet}{rgb}{0.58, 0.0, 0.83}
\title{Optimal Planning in Habit Formation Models with Multiple Goods}
\author{Mauro Bambi\thanks{Economics and Finance Department, Durham University, Durham, UK. email: mauro.bambi@durham.ac.uk}, \and Daria Ghilli \thanks{Università degli Studi di Pavia, Dipartimento di Science Economiche e Aziendali, Pavia, IT. email: daria.ghilli@unipv.it} ,\and Fausto Gozzi \thanks{Economics and Finance Department, LUISS Guido Carli University, Rome, IT. email: fgozzi@luiss.it}, \and Marta Leocata \thanks{Scuola Normale Superiore, Pisa, IT. email: marta.leocata@sns.it} }
\date{June 2022}
\begin{document}
	\maketitle
	\begin{abstract}
	In this paper, on the line e.g. of \cite{carroll2000saving}) we investigate
a model with habit formation and two types of substitute goods.
Such family of models, even in the case of 1 good, are difficult to study since their utility function is not concave in the interesting cases (see e.g. \cite{bambi2020internal}), hence the first order conditions are not sufficient.

We introduce and explain the model and provide some first results using the dynamic programming approach. Such results will form a solid ground over which a deep study of the features of the solutions can be performed.

		\medskip
		
		\emph{JEL Code: E21, E30, E60, I18.}
		
		\emph{Keywords: Habits, 2-Sector Economy.}
	\end{abstract}

	\newpage
	
\tableofcontents
%
%
%

\section{Introduction}
The main aim of this paper is to formulate and study a growth model with habits formation (on the line e.g. of \cite{carroll2000saving, carroll1997comparison})
which, differently from the above papers, takes into account the presence of two types of goods and where the perspective of the social planner is embraced. 

It has been observed the crucial relation between internal habit formation and increase in savings in a particular parameters' set, where the multiplicative utility function is never jointly concave in consumption and habits. In particular,  joint concavity is never possible when the coefficient of relative risk aversion is bigger than one, i.e. $\sigma >1$, and if agent's weighting habit is less than one, i.e. $\gamma<1$. The lack of concavity makes this class of model difficult to study, even in the case of one good (see e.g. \cite{bambi2020internal, yang2014rational, alonso2005growth, dechert2012complete, kamihigashi2007nonsmooth}).

This implies that sufficient optimality conditions of maximum principle type are missing, see page $103$ in \cite{seierstad1986optimal}.

Therefore, in this paper, we address the problem using the dynamic programming approach. We emphasize that the advantage of this approach is that it provides optimality conditions independent of the concavity assumption. Although this method is a rather powerful tool for providing uniqueness of the optimal control strategy, a uniqueness result is not demonstrated in this paper. We plan to present some results in this direction in some future work. 

This approach was developed in the seminal work of Crandall and Lions, see \cite{lions1981solutions, crandall1983viscosity} and it has found a recent application to macroeconomic problems (see  \cite{achdou2022income,freni2008multisector, calvia2023optimal}).
In this paper, we apply this theory using the homogeneity properties of the problem(for a similar approach see \cite{freni2008multisector, bambi2020internal,cannarsa1989generalized, BCD}). 

The model under analysis is a variant of the model proposed in \cite{carroll2000saving} and later also studied in \cite{bambi2020internal}. The main difference from the works cited above is that we embrace the social planner's perspective in which the accumulation of goods is not taken into account. So, we consider the problem of a social planner who takes into account two goods to be consumed and only one of them is related to a habit stock. The
instantaneous utility function that we examine for habit-related consumption was introduced in \cite{abel}, and it is
\[\frac{\left(\frac{c_1}{h^\gamma}\right)^{1-\sigma}}{1-\sigma},\]
while the utility function for the second good is the classical CRRA utility function
\[\frac{(c_2)^{1-\sigma}}{1-\sigma}.\]
We stress the fact that the utility function composed by the sum of the two utility functions above, is not strictly concave when $\sigma>1$ and $\gamma<1$.
In some future projects, we aim to investigate also the impact of lockdown on the optimization problem. 

Before analyzing the complete model, we propose some results on a further simplified model where the utility of consumption in the second goods is neglected. Then, by Dynamic Programming, we furnish some results on the complete model: we prove that the value function is a viscosity solution of the HJB equation, we derive some properties on the qualitative behavior of the value function.
We think that such results constitute a reliable basis to study in a satisfactory way the properties of the optimal paths.
The paper is organized as follows. In Section 2, the optimal control problem is introduced. In Section 3, we introduce a simplification of the optimal control problem and we prove that if the set of controls is unbounded, i.e. $(\underline \ell,+\infty)$, then the value function is trivially null and moreover we furnish some results suggesting that if the set of controls is bounded, i.e. $(\underline \ell,\bar \ell)$, then corner solutions are obtained. In Section 4, first the Maximum Principle is stated and existence and uniqueness of the steady state are proven. Then by dynamic programming approach, we prove that the value function $V$ is a viscosity solution of the HJB equation and we also derive some qualitative properties of $V$, namely we prove that $V$ is negative, decreasing, locally lipschitz and $V(+\infty)=-\infty$.

\section{Model Setup}

A social planner faces the following problem:	
	$$\max_{c_1,c_2}  \int_0^\infty e^{-\rho t} U(c_1(t),c_2(t),h^{c_1,h_0}(t)) dt$$
	subject to the following constraints
	\begin{eqnarray}
		&& c_1=A\ell_1	\label{c1}\\
		&& c_2=B\ell_2	\label{c1}\\
		&& \dot {h}=\phi( c_1 - h) \ \ with \ h(0)=h_{0} \label{habits_state0} \\
		&& \ell_1+\ell_2=1 \\
		&& \ell_1\in(\underline{\ell},\bar \ell)
	\end{eqnarray}
Renaming $\ell_1\equiv\ell$ and setting $A=1$, the problem can be rewritten as

\begin{equation}\label{optproblem}
\max_{\ell(\cdot) \in \mathcal{A}}\int_0^\infty e^{-\rho t} U(\ell(t),B(1-\ell(t)),h^{\ell,h_0}(t)) dt
\end{equation}
where $\mathcal{A}$ is the set of admissible control (which will be specified in the following sections),
subject to the following constraints
\begin{eqnarray}
	&& \dot {h}=\phi( \ell - h) \ \ with \ h(0)=h_{0} \label{habits_state1} \\
	&& \ell\in(\underline{\ell},\bar \ell).
\end{eqnarray}
%
%
%
%
%

We specify the following utility function :
$$U(c_1,c_2,h)= u(c_1,h)+v(c_2)=\frac{\left(\frac{c_1}{h^\gamma}\right)^{1-\sigma}}{1-\sigma}+\frac{c_2^{1-\sigma}}{1-\sigma}$$
The parameter $\gamma$ signifies the importance of habits in the utility function. When $\gamma = 0$, habits have no influence, and utility is solely determined by the level of consumption. Conversely, when $\gamma = 1$, habits matter as much as consumption in determining utility. For intermediate values of $\gamma$, where $\gamma \in (0,1)$, habits have an effect on utility, but less so than consumption. Our analysis focuses on this last and more realistic case.

This functional form of the utility function was introduced by Carroll et al \cite{carroll1997comparison} to solve several issues arising with another popular utility function where habits enters in a subtractive form.

Notice that $u(c_1,h)$ is monotonically increasing in $c_1$ and concave in $c_1$ for any choice of the parameters, i.e. $u_{c_1}>0$ and $u_{c_1c_1}<0$ always. On the other hand, $u(c_1,h)$ is monotonically decreasing in the habits, i.e. $u_{h}<0$, meaning that habits are harmful.\footnote{More generally we could assume that $\gamma\in(-1,1)$ so that in the interval $\gamma\in(-1,0)$ the habits becomes beneficial since $u_h>0$.} Concavity of $u(c_1,h)$ with respect to $h$, i.e. $u_{hh}<0$, implies that 
\begin{equation}
\sigma>1+\frac1\gamma.
\end{equation}
 This condition prevents $u(c_1,h)$ to be jointly concave in $(c_1,h)$ as $det(D^2u(c_1,h))\geq 0$ if and only if $\sigma\leq\frac\gamma{1-\gamma}$ (see also \cite{bambi2020internal} page 10). Nevertheless this does not necessarily imply that the solution of the problem is not a maximum. Finally $v(c_2)$ is monotonically increasing and concave in $c_2$. To sum up, in all the work we will assume the following hypothesis holding on the coefficients.
\begin{assumption}
We assume that $\sigma>1$, $\gamma<1$ and $\gamma(\sigma-1)>1$.
\end{assumption}
Substituting the production function into the utility function we get:
$$U(\ell,B(1-\ell),h)\equiv u(\ell,h)+v(\ell)=\frac{\left(\frac{\ell}{h^\gamma}\right)^{1-\sigma}}{1-\sigma}+\tilde B\frac{(1-\ell)^{1-\sigma}}{1-\sigma}$$
where $\tilde B\equiv B^{1-\sigma}$. In the core of the text we will also use the following notation,
\[u(\ell,h)=u_1(h)u_2(\ell)\]
where $u_1(h)=\frac{h^{\gamma(\sigma-1)}}{1-\sigma}$, $u_2(\ell)=\ell^{1-\sigma}$.

In order to stress the relevance of such utility function, first we consider an simpler version per the second term is not considered. We consider

\[\widetilde{U}(\ell,h)= u(\ell,h)=\frac{\left(\frac{\ell}{h^\gamma}\right)^{1-\sigma}}{1-\sigma}\]

\section{Some results on a simpler model}
We consider the following optimal control problem for a social planner,

\begin{equation}\label{valuefunction_simple}
W(h_0)=\sup_{\ell(\cdot)\in \mathcal{A}}  \  \int_0^\infty e^{-\rho t} \widetilde{U}\left(\ell(t), h^{h_0,\ell}(t)\right) dt
\end{equation}
	subject to the following state equation
\begin{equation}\label{eq:h_simple}
 \dot {h}=\phi( \ell - h) \ \ with \ h(0)=h_{0}
 \end{equation}
and where the set of admissible control is,
\[\mathcal{A}=\{\ell\in L^1_{loc}(\mathbb{R}_+)\,\, \text{s.t.}\,\, \ell(t)\in[\underline{\ell},\bar \ell],\,\,\text{and}\,\, h^{h_0,\ell}(t)\geq 0,\,\,\forall t\geq 0\}.\]
Notice that the state constraint prescribed in the definition of the set $\mathcal{A}$ is somehow fictitious. Indeed, the state $h$ remains positive for each $\ell \in L^1_{loc}(\mathbb{R}_+)$. The set of admissible control can be rewritten simply as,
\begin{equation}\label{eq: admissible_controls}
\mathcal{A}=\{\ell\in L^1_{loc}(\mathbb{R}_+)\,\, \text{s.t.}\,\, \ell(t)\in[\underline{\ell},\bar \ell]\}.
\end{equation}
We first, study the problem by considering $\bar\ell=+\infty$.

\begin{lemma}\label{lem: prop_v_simple}
Let $W$ be the value function defined in \eqref{valuefunction_simple} with. Then
\begin{itemize}
\item[(i)]  $W$ is negative,
\item[(ii)] $W$ is decreasing.
\end{itemize}
\end{lemma}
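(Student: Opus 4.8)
The plan is to establish both properties directly from the definition of $W$, without invoking the HJB equation, using only the sign of the utility and the explicit solution of the linear state equation. The two ingredients I would prepare first are the closed form of the trajectory, obtained by solving \eqref{eq:h_simple},
\[
h^{h_0,\ell}(t)=e^{-\phi t}h_0+\phi\int_0^t e^{-\phi(t-s)}\ell(s)\,ds,
\]
and the factorization $\widetilde U(\ell,h)=u_1(h)u_2(\ell)$ with $u_1(h)=\frac{h^{\gamma(\sigma-1)}}{1-\sigma}$ and $u_2(\ell)=\ell^{1-\sigma}$ already recorded in the text. Throughout I write $J(h_0,\ell):=\int_0^\infty e^{-\rho t}\widetilde U(\ell(t),h^{h_0,\ell}(t))\,dt$ for the payoff associated with a control $\ell$.

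For (i), I would simply read off the sign of the integrand. Since $\sigma>1$ we have $1-\sigma<0$, and since $\underline\ell>0$ and $h^{h_0,\ell}(t)>0$ the ratio $\ell(t)/h^{h_0,\ell}(t)^\gamma$ is strictly positive; hence $\widetilde U(\ell(t),h^{h_0,\ell}(t))=\frac{(\ell/h^\gamma)^{1-\sigma}}{1-\sigma}<0$ for every $t$ and every admissible $\ell$. Therefore each payoff $J(h_0,\ell)$ is $\le 0$ (possibly $-\infty$), and taking the supremum over $\mathcal A$ gives $W(h_0)\le 0$, which is the asserted negativity. I would note that this argument only yields the weak inequality; indeed in the regime $\bar\ell=+\infty$ one expects equality $W\equiv 0$, so strict negativity is not to be hoped for here and a uniform lower bound would require $\bar\ell<\infty$.

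For (ii), I would use a pathwise comparison. Fix $h_0\le h_0'$ and an arbitrary $\ell\in\mathcal A$. From the closed form above, $h^{h_0,\ell}(t)-h^{h_0',\ell}(t)=e^{-\phi t}(h_0-h_0')\le 0$, so $h^{h_0,\ell}(t)\le h^{h_0',\ell}(t)$ for every $t$. Next, $\widetilde U(\ell,\cdot)$ is decreasing in $h$: indeed $u_2(\ell)=\ell^{1-\sigma}>0$, while $u_1(h)$ is decreasing in $h$ because the exponent $\gamma(\sigma-1)>1>0$ (by Assumption) makes $h\mapsto h^{\gamma(\sigma-1)}$ increasing and the prefactor $1/(1-\sigma)$ is negative. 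Consequently $\widetilde U(\ell(t),h^{h_0,\ell}(t))\ge \widetilde U(\ell(t),h^{h_0',\ell}(t))$ pointwise, and integrating against $e^{-\rho t}$ yields $J(h_0,\ell)\ge J(h_0',\ell)$. Since the feasible set $\mathcal A$ in \eqref{eq: admissible_controls} does not depend on the initial condition (the state constraint being vacuous, as noted before the lemma), I can pass to the supremum over the common set and conclude $W(h_0)=\sup_\ell J(h_0,\ell)\ge \sup_\ell J(h_0',\ell)=W(h_0')$, i.e. $W$ is decreasing.

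The individual steps are routine; the only point requiring care is the last one, namely that the supremum preserves the pathwise inequality. This is legitimate precisely because the admissible set is identical for $h_0$ and $h_0'$: if the state constraint $h\ge 0$ were genuinely binding, the feasible controls would differ and the monotone comparison under the $\sup$ could fail, so I would make explicit that \eqref{eq: admissible_controls} holds uniformly in the initial datum. I would also record that $\underline\ell>0$ is what guarantees $\widetilde U$ is well defined along every trajectory, so that all payoffs lie in $[-\infty,0)$ and $W(h_0)\in(-\infty,0]$.
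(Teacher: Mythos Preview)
Your proof is correct and follows essentially the same approach as the paper: sign of the utility gives (i), and pointwise comparison of trajectories via linearity of the state equation plus monotonicity of $\widetilde U$ in $h$ gives (ii). The paper's own proof is just a terser version of what you wrote; your extra remarks about the admissible set being independent of $h_0$ and about the weak-versus-strict inequality in (i) are helpful clarifications that the paper leaves implicit.
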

\begin{proof}
(i) Since the utility function is negative, $W\leq 0$.\\
(ii) By the linearity of the state equation we get that if $h_1<h_2$, then $h^{h_1,\ell}(t)<h^{h_2,\ell}(t)$ for all $t\in \mathbb{R}_+$. Since the utility function, is decreasing with respect to the variable $h$, we conclude that
\[W(h_1)-W(h_2)>0.\]
\end{proof}

\begin{proposition}\label{prop:W_null}
If $\phi(1-\sigma)<\rho$ and $\bar\ell=+\infty$, then the value function defined in \eqref{valuefunction_simple} is null, i.e. $W\equiv 0$.
\end{proposition}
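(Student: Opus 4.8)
The plan is to pair the sign of the integrand with an explicit family of admissible controls that drives the payoff up to zero. Since $\sigma>1$ we have $\widetilde U<0$ pointwise, so Lemma~\ref{lem: prop_v_simple}(i) already gives $W\le 0$; the entire content of the statement is therefore the reverse inequality $W\ge 0$. Because $W\le 0$ is known, it suffices to exhibit, for each $h_0>0$, a family of admissible controls whose discounted payoffs converge to $0$ from below. The heuristic is that $\widetilde U$ is homogeneous of degree $(1-\sigma)(1-\gamma)$, which is strictly negative since $\sigma>1$ and $\gamma<1$; hence ``scaling everything up'' pushes the instantaneous utility towards $0$, and the job is to realize this scaling while keeping the discounted integral convergent.

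Concretely, I would take the exponentially growing controls $\ell(t)=\ell_0\,e^{gt}$ with growth rate $g=\phi/(1-\gamma)>0$, and let $\ell_0\to+\infty$. Because $\bar\ell=+\infty$ and $g>0$, these satisfy $\ell(t)\ge\ell_0\ge\underline\ell$, so they lie in $\mathcal A$ as defined in \eqref{eq: admissible_controls}. Solving the linear state equation \eqref{eq:h_simple} explicitly gives
\[
h(t)=h_0e^{-\phi t}+\frac{\phi\ell_0}{\phi+g}\bigl(e^{gt}-e^{-\phi t}\bigr)\le\Bigl(h_0+\tfrac{\phi\ell_0}{\phi+g}\Bigr)e^{gt}=:K_{\ell_0}e^{gt},
\]
so that $h$ grows at the same rate $g$ and is, up to a bounded transient, proportional to $\ell_0$ (indeed $K_{\ell_0}\sim\tfrac{\phi}{\phi+g}\,\ell_0$).

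Substituting into $\widetilde U(\ell,h)=\tfrac{1}{1-\sigma}\ell^{1-\sigma}h^{\gamma(\sigma-1)}$ and using the bound above, the discounted integrand is controlled in absolute value by a constant times $\ell_0^{1-\sigma}K_{\ell_0}^{\gamma(\sigma-1)}\,e^{(g(1-\sigma)(1-\gamma)-\rho)t}$, whose time-exponent is exactly $\phi(1-\sigma)-\rho$ for our choice of $g$. This is precisely where the hypothesis enters: $\phi(1-\sigma)<\rho$ is the condition making the exponent negative, so the integral over $[0,+\infty)$ converges and is bounded by $\tfrac{1}{\sigma-1}\,\ell_0^{1-\sigma}K_{\ell_0}^{\gamma(\sigma-1)}\,(\rho-\phi(1-\sigma))^{-1}$, uniformly in $t$. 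Since $K_{\ell_0}^{\gamma(\sigma-1)}\sim c\,\ell_0^{\gamma(\sigma-1)}$, the scaling factor collapses to $\ell_0^{(1-\sigma)(1-\gamma)}$, and $(1-\sigma)(1-\gamma)<0$ forces it to $0$ as $\ell_0\to+\infty$. The payoff of $\ell_0\,e^{gt}$ therefore tends to $0$ from below, giving $W(h_0)\ge 0$ and hence $W(h_0)=0$ for every $h_0$.

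The routine parts are the explicit ODE solution and the asymptotics of the integrand; the two points that need care are the sign bookkeeping (every inequality involving $\widetilde U$ reverses because $1-\sigma<0$, so bounding the payoff means upper-bounding $h$) and controlling the transient $h_0e^{-\phi t}$ so that the $\ell_0^{(1-\sigma)(1-\gamma)}$ decay holds uniformly in time rather than only in the limit. The genuinely conceptual step is the choice of growth rate: any $g$ with $g(1-\sigma)(1-\gamma)<\rho$ would do, but $g=\phi/(1-\gamma)$ is the one that ties the convergence threshold exactly to the stated hypothesis $\phi(1-\sigma)<\rho$. (If one additionally assumes $\rho>0$, the even simpler constant controls $\ell\equiv L$ with $L\to+\infty$ work, since then $h(t)\le L$ and the payoff is bounded below by $\tfrac{1}{\rho(1-\sigma)}L^{(1-\sigma)(1-\gamma)}\to0$; the growing-control construction is what delivers the sharper hypothesis.)
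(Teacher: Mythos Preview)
Your argument is correct. The key steps—solving the linear ODE, upper-bounding $h$ by $K_{\ell_0}e^{gt}$, collapsing the time exponent to $\phi(1-\sigma)-\rho$ via the choice $g=\phi/(1-\gamma)$, and then reading off the $\ell_0^{(1-\sigma)(1-\gamma)}$ decay—are all sound, and the sign bookkeeping is handled properly.

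The paper takes the simpler route you mention parenthetically at the end: it uses the \emph{constant} controls $\ell\equiv n$ and lets $n\to\infty$. With $\ell\equiv n$ one has $h(t)=e^{-\phi t}\bigl(h_0+n(e^{\phi t}-1)\bigr)\le n$ for $n\ge h_0$, so $|J(n)|\le \tfrac{1}{(\sigma-1)\rho}\,n^{(1-\sigma)(1-\gamma)}\to 0$. This is shorter but, as you correctly observe, it implicitly relies on $\rho>0$ rather than on the stated hypothesis $\phi(1-\sigma)<\rho$ (which, since $\sigma>1$, is strictly weaker). Your exponentially growing controls are precisely what makes the stated hypothesis sharp: the growth rate $g=\phi/(1-\gamma)$ is chosen so that the integrability threshold becomes exactly $\rho>\phi(1-\sigma)$, covering also the range $\phi(1-\sigma)<\rho\le 0$ that the paper's construction does not reach. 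In the standard economic setting $\rho>0$ is always assumed, so the two approaches coincide there; your version is the one that matches the proposition as written.
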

\begin{proof}
In order to prove the result, we just find a sequence of admissible controls, $c_n$ such that $J(c_n)\to 0$ and $c_n\to \infty$ for $n\to \infty$ . By choosing $c_n=n$, one can check that
\[J(c_n)= \int_0^\infty e^{(-\rho+\phi(1-\sigma)) t} \frac{n^{1-\sigma}}{1-\sigma}\left(h_0+n\frac{e^{\phi t}-1}{\phi} \right)^{\gamma(1-\sigma)} dt\to 0, \quad n\to \infty\]
\end{proof}

\begin{remark}
Notice that if we assume also that $\underline{\ell}=0$, then the value function $W$ is $(1-\gamma)(1-\sigma)$-homogenous. Indeed, by linearity of the state equation, we get that for each $\alpha>0$ and $h_0\in \mathbb{R}_+$,
\begin{eqnarray*}
W(\alpha h_0)&=& \sup_{\ell(\cdot)\in \mathcal{A}}   \int_0^\infty e^{-\rho t} \frac{\left(\ell(t)\right)^{1-\sigma}}{1-\sigma} \left(h^{\alpha h_0, \ell}(t)\right)^{-\gamma(1-\sigma)} dt\\
&=& \sup_{\ell(\cdot)\in \mathcal{A}}   \int_0^\infty e^{-\rho t} \frac{\left(\alpha \frac{\ell(t)}{\alpha}\right)^{1-\sigma}}{1-\sigma} \left(h^{\alpha h_0,\alpha\frac{\ell}{\alpha}}(t)\right)^{-\gamma(1-\sigma)} dt\\
&=& \sup_{\frac{\ell(\cdot)}{\alpha}\in \mathcal{A}}   \int_0^\infty e^{-\rho t} \frac{\left(\alpha \frac{\ell(t)}{\alpha}\right)^{1-\sigma}}{1-\sigma} \left(h^{\alpha h_0,\alpha\frac{\ell}{\alpha}}(t)\right)^{-\gamma(1-\sigma)} dt\\
&=& \sup_{\tilde{\ell}(\cdot)\in \mathcal{A}}  \int_0^\infty e^{-\rho t} \frac{\left(\alpha \tilde{\ell}(t)\right)^{1-\sigma}}{1-\sigma} \left(h^{\alpha h_0,\alpha\tilde{\ell}}(t)\right)^{-\gamma(1-\sigma)} dt\\
&=& \alpha^{(1-\gamma)(1-\sigma)}\sup_{\tilde\ell(\cdot)\in \mathcal{A}}   \int_0^\infty e^{-\rho t} \frac{\left( \tilde{\ell}(t)\right)^{1-\sigma}}{1-\sigma} \left(h^{ h_0,\tilde{\ell}}\right)^{-\gamma(1-\sigma)} dt=\alpha^{(1-\gamma)(1-\sigma)}W(h_0).\\
\end{eqnarray*}

Coherently with the result of Proposition \ref{prop:W_null}, we observe that the only function negative, decreasing and $(1-\gamma)(1-\sigma)$-homogenous is the null function.
\end{remark}

Note that in the paper by Carroll et al. \cite{carroll2000saving}, a zero-value function cannot be a solution because the agents in their model have the opportunity to postpone current consumption by investing in the capital stock, thereby increasing future utility. This particular channel is absent in our paper, setting our approach apart from theirs.

If $\bar \ell\neq +\infty$, the problem is more difficult to study. Although, we do not present a complete study for this problem, we will show some partial results that seems to confirm that in this framework the optimum is reached at $\ell(t)\equiv\overline{\ell}$. In the following proposition, we prove that if only constant controls are considered, our conjecture is true.

\begin{proposition}
Consider $\bar\ell\neq+\infty$, and as set of admissible control, $ \tilde{\mathcal{A}}\subset \mathcal{A}$
\[\tilde{\mathcal{A}}=\{\ell(t)\equiv l, \,\,l\in[\underline{\ell},\bar{\ell}]\}.\]
Then the problem
\begin{equation}\label{valuefunction_simple2}
\tilde{W}(h_0)=\sup_{\ell(\cdot)\in \tilde{\mathcal{A}}}  \  \int_0^\infty e^{-\rho t} \frac{\left(\frac{\ell(t)}{h(t)^\gamma}\right)^{1-\sigma}}{1-\sigma} dt
\end{equation}
subject to the state equation \eqref{eq:h_simple}, attains its maximum for $\ell=\bar{\ell}$, i.e. $\tilde{W}(h_0)=J(\bar{\ell})$.
\end{proposition}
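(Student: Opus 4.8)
The plan is to reduce \eqref{valuefunction_simple2} to a one-dimensional maximization over the constant $l\in[\underline{\ell},\bar\ell]$ and to show that the resulting objective $J(l)$ is monotonically increasing in $l$, so that its maximum is attained at the right endpoint $l=\bar\ell$. First I would solve the state equation \eqref{eq:h_simple} explicitly under a constant control $\ell(t)\equiv l$. Since $\dot h=\phi(l-h)$ is linear with constant forcing, its solution is
\[
h(t)=l+(h_0-l)e^{-\phi t},
\]
which stays strictly positive (it lies between $\min\{h_0,l\}>0$ and $\max\{h_0,l\}$) and converges to the steady state $l$ as $t\to\infty$. Substituting this in gives
\[
J(l)=\frac{1}{1-\sigma}\int_0^\infty e^{-\rho t}\,l^{1-\sigma}\big(l+(h_0-l)e^{-\phi t}\big)^{\gamma(\sigma-1)}\,dt .
\]
Because $h(t)\to l$, for large $t$ the integrand behaves like $e^{-\rho t}\,l^{(1-\sigma)(1-\gamma)}/(1-\sigma)$, so the integral is finite whenever $\rho>0$ and $l>0$; thus $J$ is a well-defined, finite, negative function on $[\underline{\ell},\bar\ell]$.

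The core of the argument is a pointwise (in $t$) monotonicity statement for the effective consumption
\[
x(t;l)=\frac{l}{h(t)^\gamma}=\frac{l}{\big(l+(h_0-l)e^{-\phi t}\big)^{\gamma}} .
\]
Writing $D(t;l)=l+(h_0-l)e^{-\phi t}=l(1-e^{-\phi t})+h_0 e^{-\phi t}$, a direct computation yields
\[
\frac{\partial x}{\partial l}=D^{-\gamma-1}\Big[\,l(1-\gamma)(1-e^{-\phi t})+h_0\,e^{-\phi t}\,\Big].
\]
Here every factor is strictly positive: the bracket is positive precisely because $\gamma<1$ (our standing assumption) makes $1-\gamma>0$, while the term $h_0 e^{-\phi t}>0$ controls the instant $t=0$. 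Hence $x(t;l)$ is strictly increasing in $l$ for every $t\ge 0$.

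Finally I would transfer this monotonicity through the utility. Seen as a function of effective consumption, the flow utility is $x\mapsto x^{1-\sigma}/(1-\sigma)$, whose derivative $x^{-\sigma}$ is positive, so utility is increasing in $x$ for every admissible $\sigma$. Combining the two monotonicities, for $l_1<l_2$ one has $x(t;l_1)\le x(t;l_2)$, and therefore the (negative) integrand at $l_1$ is pointwise dominated by the integrand at $l_2$; integrating in $t$ gives $J(l_1)\le J(l_2)$. In particular $J(l)\le J(\bar\ell)$ for every admissible constant $l$, so $\tilde W(h_0)=J(\bar\ell)$, as claimed.

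I expect the only genuinely delicate point to be the sign of the bracket in $\partial x/\partial l$, which is exactly where the assumption $\gamma<1$ is indispensable: economically it says that along a constant policy the rise in the habit stock $h$ never fully offsets the rise in consumption $l$, so effective consumption—and hence utility—always improves with a larger constant control. The remaining steps (the explicit ODE solution, finiteness of the integral, and monotonicity of the CRRA flow) are routine. One could instead differentiate $J$ under the integral sign and prove $J'(l)>0$, but the pointwise-domination route is cleaner since it needs no justification of differentiation under the integral and no regularity beyond what is manifest.
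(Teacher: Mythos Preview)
Your proof is correct and follows essentially the same route as the paper: both arguments show that, for each fixed $t$, the integrand is increasing in $l$ precisely because $\gamma<1$ makes the bracket $a(t)+b(t)l(1-\gamma)>0$ (with $a(t)=h_0e^{-\phi t}$, $b(t)=1-e^{-\phi t}$), and then integrate. The only difference is presentational---you factor the computation through the effective consumption $x(t;l)=l/h(t)^\gamma$ and the monotonicity of the CRRA map $x\mapsto x^{1-\sigma}/(1-\sigma)$, whereas the paper differentiates the composite integrand $F(l)=\tfrac{l^{1-\sigma}}{1-\sigma}(a(t)+b(t)l)^{\gamma(\sigma-1)}$ in one step.
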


\begin{proof}
The result is a consequence of the monotonicity of the function,
\[F(l)=\frac{l^{1-\sigma}}{1-\sigma}\left[e^{-\phi t}h_0+\phi l\int_0^t e^{-\phi(t-s)}ds\right].\]
We notice that
\[F'(l)=l^{-\sigma}(a(t)+b(t)l)^{\gamma(\sigma-1)}\left[\frac{a(t)+b(t)l(1-\gamma)}{a(t)+b(t)}\right],\]
with $a(t)=e^{-\phi t}h_0$, $b(t)=1-e^{-\phi t}$. Since $a(t), b(t)>0$ and $\gamma<1$, we have that $F$ is an increasing function, and so it is the objective function.
\end{proof}

\begin{proposition}
Let $W$ be the value function defined in \eqref{valuefunction_simple} with $\bar{\ell}\neq +\infty$. Then for $h_0\to \infty$,
\[c_2 \frac{(h_0\wedge \bar l)^{\gamma(\sigma-1)}}{1-\sigma}\leq W(h_0)\leq c_1\frac{h_0^{\gamma(\sigma-1)}}{1-\sigma} \]
\end{proposition}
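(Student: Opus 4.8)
The plan is to prove the two estimates separately, both resting on the explicit solution of the linear state equation
$$h^{h_0,\ell}(t)=e^{-\phi t}h_0+\phi\int_0^t e^{-\phi(t-s)}\ell(s)\,ds,$$
together with the elementary sign facts that $1-\sigma<0$ while $\gamma(\sigma-1)>0$, so that the running payoff $e^{-\rho t}\frac{\ell^{1-\sigma}h^{\gamma(\sigma-1)}}{1-\sigma}$ is everywhere nonpositive and, crucially, dividing any inequality by $1-\sigma$ reverses it. Writing $\beta:=\gamma(\sigma-1)>1$ for brevity, I would record from the representation above and from $0\le\underline\ell\le\ell(t)\le\bar\ell$ the two estimates I actually need: the control-uniform lower bound $h^{h_0,\ell}(t)\ge e^{-\phi t}h_0$ (obtained by discarding the nonnegative integral term), and, for the particular constant control $\ell\equiv\bar\ell$ when $h_0\ge\bar\ell$, the upper bound $h^{h_0,\bar\ell}(t)=e^{-\phi t}h_0+(1-e^{-\phi t})\bar\ell\le h_0$ (a convex combination of $h_0$ and $\bar\ell$). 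These are the only dynamical facts the argument uses.

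For the upper bound I would argue that the estimate holds control by control and hence passes to the supremum. Since $\ell(t)\le\bar\ell$ gives $\ell(t)^{1-\sigma}\ge\bar\ell^{1-\sigma}$ and $h^{h_0,\ell}(t)\ge e^{-\phi t}h_0$ gives $h^{h_0,\ell}(t)^{\beta}\ge e^{-\phi\beta t}h_0^{\beta}$, multiplying by $\tfrac1{1-\sigma}<0$ yields
$$e^{-\rho t}\frac{\ell(t)^{1-\sigma}h^{h_0,\ell}(t)^{\beta}}{1-\sigma}\le e^{-(\rho+\phi\beta)t}\,\frac{\bar\ell^{1-\sigma}h_0^{\beta}}{1-\sigma},$$
and integrating in $t$ (the integral converges because $\rho+\phi\beta>0$) produces $W(h_0)\le c_1\frac{h_0^{\beta}}{1-\sigma}$ with $c_1=\frac{\bar\ell^{1-\sigma}}{\rho+\phi\beta}>0$.

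For the lower bound I would simply test the value function against the admissible constant control $\ell\equiv\bar\ell\in\tilde{\mathcal A}\subset\mathcal A$. For $h_0\ge\bar\ell$ the bound $h^{h_0,\bar\ell}(t)\le h_0$ gives $h^{h_0,\bar\ell}(t)^{\beta}\le h_0^{\beta}$, and once more multiplying by $\frac1{1-\sigma}<0$ reverses this, so the running payoff is bounded below by $e^{-\rho t}\frac{\bar\ell^{1-\sigma}h_0^{\beta}}{1-\sigma}$; integrating gives $W(h_0)\ge J(\bar\ell)\ge c_2\frac{h_0^{\beta}}{1-\sigma}$ with $c_2=\frac{\bar\ell^{1-\sigma}}{\rho}>0$. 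Note that $c_2>c_1$, which is consistent with both quantities being negative, and that both bounds scale like $h_0^{\gamma(\sigma-1)}$; in particular they pin down $W(h_0)\to-\infty$ at this rate.

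There is no deep obstacle here: the argument is elementary once the state equation is solved, and the only points requiring genuine care are the systematic reversal of inequalities induced by $1-\sigma<0$ and the fact that the upper estimate on $h^{h_0,\ell}$ must be established uniformly in the control (which is why I isolate the universal lower bound $h\ge e^{-\phi t}h_0$ for the $W$-upper bound and use the explicit constant control only for the $W$-lower bound). I would, however, flag a consistency point in the statement: for $h_0\ge\bar\ell$ the natural base of the lower estimate is $h_0$, i.e.\ $h_0\vee\bar\ell$, not $h_0\wedge\bar\ell$; running the same constant-control computation for arbitrary $h_0$ yields $W(h_0)\ge c_2\frac{(h_0\vee\bar\ell)^{\gamma(\sigma-1)}}{1-\sigma}$, whereas a lower bound built on $h_0\wedge\bar\ell$ would reduce to a negative constant for large $h_0$ and so could not bound below a value function diverging to $-\infty$. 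I would therefore state and prove the lower bound with $h_0\vee\bar\ell$.
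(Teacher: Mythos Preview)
Your argument is correct and, for the lower bound, coincides with the paper's: both test the admissible constant control $\ell\equiv\bar\ell$, observe that $h^{h_0,\bar\ell}(t)=e^{-\phi t}h_0+(1-e^{-\phi t})\bar\ell$ is a convex combination of $h_0$ and $\bar\ell$, and integrate. Your remark about $h_0\vee\bar\ell$ versus $h_0\wedge\bar\ell$ is well taken: the computation in the paper itself yields the bound with the larger of the two (so $h_0$ in the regime $h_0\ge\bar\ell$), and the stated $\wedge$ version would indeed reduce to a fixed negative constant for large $h_0$, incompatible with the upper bound forcing $W(h_0)\to-\infty$.

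For the upper bound your route differs from the paper's and is in fact the more robust one. The paper bounds the trajectory by $h^{h_0,\ell}(t)\ge h_0$, but this inequality fails precisely in the regime $h_0>\bar\ell$ of interest, since then $\dot h=\phi(\ell-h)<0$ initially and the trajectory decreases. Your use of the control-uniform estimate $h^{h_0,\ell}(t)\ge e^{-\phi t}h_0$ (obtained by dropping the nonnegative integral term) avoids this issue at the cost of an extra exponential factor, which is harmless since $\rho+\phi\gamma(\sigma-1)>0$; the resulting constant $c_1=\bar\ell^{1-\sigma}/(\rho+\phi\gamma(\sigma-1))$ is perfectly acceptable given that the statement leaves $c_1,c_2$ unspecified.
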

\begin{proof}
First, we observe that
\begin{eqnarray*}
V(h_0)&\geq& \int_0^\infty e^{-\rho t} \frac{\left(\bar\ell\right)^{1-\sigma}}{1-\sigma}(h^{h_0,\bar \ell}(t))^{-\gamma(1-\sigma)} dt\\
&=& \int_0^\infty e^{-\rho t} \frac{\left(\bar\ell\right)^{1-\sigma}}{1-\sigma}(e^{-\phi t}(h_0-\bar \ell)+\bar \ell)^{-\gamma(1-\sigma)} dt
\end{eqnarray*}
Since if $h_0\geq \bar \ell$, then $\bar \ell\leq e^{-\phi t}(h_0-\bar \ell)+\bar \ell\leq h_0$ and if $h_0\leq \bar \ell$, then $h_0\leq e^{-\phi t}(h_0-\bar \ell)+\bar \ell\leq \bar \ell$, we get that
\[V(h_0)\geq \frac{\bar \ell^{1-\sigma}}{1-\sigma}\frac{\left(h_0\wedge \bar \ell\right)^{\gamma(\sigma-1)}}{\rho}.\]
Since $h^{h_0,\ell}\geq h_0$ and moreover the function $\frac{\ell^{1-\sigma}}{1-\sigma}$ is increasing and negative
\begin{eqnarray*}
V(h_0)&=&\sup_{\ell(\cdot) \in \tilde{\mathcal{A}}} \int_0^\infty e^{-\rho t} \frac{\ell(t)^{1-\sigma}}{1-\sigma}(h^{h_0,\ell}(t))^{-\gamma(1-\sigma)} dt\\
&\leq& \int_0^\infty e^{-\rho t} \frac{\left(\bar\ell\right)^{1-\sigma}}{1-\sigma}(h_0)^{-\gamma(1-\sigma)} dt\\
&=&  \frac{1}{\rho}\frac{\left(\bar\ell\right)^{1-\sigma}}{1-\sigma}(h_0)^{-\gamma(1-\sigma)}\\
\end{eqnarray*}

\end{proof}

\section{Some results on the optimization problem \eqref{optproblem}}

We now present some results on the richer model with two consumption goods. We start with some preliminary analysis based on the Maximum Principle. It is essential to note that while the steady-state results we will present adhere to the necessary conditions for optimality, we cannot ascertain whether they also meet the sufficiency conditions. This issue arises due to the non-concavity of the utility function.
Our future purpose is to investigate whether the dynamic programming approach helps to understand whether the optimal strategies, trajectories and costate, coincide with the unique solution of the Maximum Principle.

\subsection{Existence and uniqueness of steady state}
In order to study existence and uniqueness of the steady state, we derive the current value Hamiltonian:
\begin{eqnarray}\label{eq:H_CV}
H_{CV}(\ell,h,\mu)&\equiv&u(\ell,h)+v(\ell)+\mu\phi(\ell-h)\\ &=& \frac{\left(\frac{\ell}{h^\gamma}\right)^{1-\sigma}}{1-\sigma}+\tilde B\frac{(1-\ell)^{1-\sigma}}{1-\sigma}+\mu\phi(\ell-h)
\end{eqnarray}
which is not concave as the utility function $u(.)$ is not jointly concave in $(\ell,h)$. We apply the maximum principle, which we recall in the  following.
\begin{proposition}
Let $\ell$ be a solution of the optimal control problem \eqref{optproblem}. Then $\ell$ solves
\begin{eqnarray}\label{eq:maxprinc}
	&& \dot{\mu}=-u_h(\ell,h)+\mu(\phi+\rho)\\
&& u_\ell(\ell,h)+v_\ell(\ell)+\mu\phi=0\\
&&\dot{h}=\phi(\ell-h) \\
& &\lim_{t \to +\infty} h\mu e^{-\rho t}=0
\end{eqnarray}
\end{proposition}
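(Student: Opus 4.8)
The statement is the classical Pontryagin Maximum Principle specialised to \eqref{optproblem}, providing \emph{necessary} conditions for optimality; I stress at the outset that the non-concavity of the Hamiltonian \eqref{eq:H_CV}, which obstructs sufficiency, is irrelevant here. The plan is therefore to invoke a standard infinite-horizon necessary-conditions theorem (e.g. the one recalled in \cite{seierstad1986optimal}) and to read off the explicit form of each line of \eqref{eq:maxprinc} from the Hamiltonian.

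First I would work in present-value form. Introducing the present-value costate $p$ and the Hamiltonian $\mathcal{H}(t,\ell,h,p)=e^{-\rho t}\big(u(\ell,h)+v(\ell)\big)+p\,\phi(\ell-h)$, the finite-horizon maximum principle applied on each truncation $[0,T]$ (and then passed to the limit, or the infinite-horizon version applied directly) yields the adjoint equation $\dot p=-\partial_h\mathcal H=-e^{-\rho t}u_h(\ell,h)+p\phi$, the pointwise maximisation of $\ell\mapsto\mathcal H$ over the control set, and the state dynamics $\dot h=\phi(\ell-h)$.

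I would then pass to current values by setting $\mu=e^{\rho t}p$. Substituting $p=e^{-\rho t}\mu$ and $\dot p=e^{-\rho t}(\dot\mu-\rho\mu)$ into the adjoint equation and cancelling $e^{-\rho t}$ gives $\dot\mu=-u_h(\ell,h)+\mu(\phi+\rho)$, the first line of \eqref{eq:maxprinc}. Since $H_{CV}=e^{\rho t}\mathcal H$ and $e^{\rho t}>0$, maximising $\mathcal H$ in $\ell$ is the same as maximising $H_{CV}$; because the admissible controls take values in the \emph{open} interval $(\underline\ell,\bar\ell)$, an optimal $\ell(t)$ is interior, so this maximum is characterised by the vanishing first-order condition $u_\ell(\ell,h)+v_\ell(\ell)+\mu\phi=0$, the second line. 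The third line is simply \eqref{habits_state1}.

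The delicate point, and the main obstacle, is the transversality condition $\lim_{t\to\infty}h\mu e^{-\rho t}=0$, which in present-value form reads $\lim_{t\to\infty}h(t)p(t)=0$. For infinite-horizon problems this does \emph{not} follow merely by letting $T\to\infty$ in the finite-horizon endpoint condition $p(T)=0$ --- it can genuinely fail without extra structure (Halkin's counterexample). To obtain it I would verify the growth hypotheses of the infinite-horizon theorem using the model's specific features: the explicit solution of the linear state equation keeps $h(t)$ bounded along admissible trajectories, the adjoint equation can be integrated to control the size of $\mu(t)$, and the exponential discount $e^{-\rho t}$ together with the standing parameter restrictions then forces the product to vanish. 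Making these estimates uniform, rather than merely formal, is where the real work lies.
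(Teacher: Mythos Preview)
The paper does not actually prove this proposition: it is introduced by the sentence ``We apply the maximum principle, which we recall in the following,'' and is stated without any accompanying argument. Your derivation is the standard one (present-value Hamiltonian, adjoint equation, change of variables $\mu=e^{\rho t}p$, interior first-order condition), and it is correct; in particular your remark that the infinite-horizon transversality condition is the only non-routine point, and that it requires a separate growth argument rather than a limit of the finite-horizon endpoint condition, is well taken. Since the paper treats the proposition as a quotation of a known result rather than something to be proved, there is nothing to compare your approach against.
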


Consider now a stationary steady state of the economy where $\dot h=\dot\mu=0$. From the state equation we have immediately that $h=\ell=h^*$ with $h^*$ indicating the steady state value.

\begin{proposition}
	A unique steady state $h=h^*$ always exists.
\end{proposition}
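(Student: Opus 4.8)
The plan is to collapse the stationary system coming from the Maximum Principle into a single scalar equation in the unknown $h^*$, and then establish existence and uniqueness of its root by combining the intermediate value theorem with a strict monotonicity argument. First I would use the two stationarity conditions. From $\dot h=\phi(\ell-h)=0$ we already have $\ell=h=h^*$, as noted in the text; and from $\dot\mu=0$ in the first costate equation the multiplier is pinned down as $\mu=u_h(h^*,h^*)/(\phi+\rho)$. Substituting this expression into the first-order condition $u_\ell(\ell,h)+v_\ell(\ell)+\mu\phi=0$ eliminates $\mu$ and leaves the single equation
\[
u_\ell(h^*,h^*)+v_\ell(h^*)+\frac{\phi}{\phi+\rho}\,u_h(h^*,h^*)=0.
\]

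Next I would compute the partial derivatives explicitly. From $u(\ell,h)=\ell^{1-\sigma}h^{\gamma(\sigma-1)}/(1-\sigma)$ and $v(\ell)=\tilde B(1-\ell)^{1-\sigma}/(1-\sigma)$ one obtains $u_\ell=\ell^{-\sigma}h^{\gamma(\sigma-1)}$, $u_h=-\gamma\,\ell^{1-\sigma}h^{\gamma(\sigma-1)-1}$ and $v_\ell=-\tilde B(1-\ell)^{-\sigma}$. The key algebraic simplification occurs on the diagonal $\ell=h=h^*$, where $u_h(h^*,h^*)=-\gamma\,u_\ell(h^*,h^*)=-\gamma\,(h^*)^{\gamma(\sigma-1)-\sigma}$, so the steady-state equation reduces to
\[
\kappa\,(h^*)^{\gamma(\sigma-1)-\sigma}=\tilde B\,(1-h^*)^{-\sigma},\qquad \kappa:=1-\frac{\gamma\phi}{\phi+\rho}=\frac{\rho+\phi(1-\gamma)}{\phi+\rho}.
\]
Because $\gamma<1$ and $\phi,\rho>0$, the constant $\kappa$ is strictly positive, and recall $\tilde B=B^{1-\sigma}>0$.

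Finally I would define $G(h):=\kappa\,h^{\gamma(\sigma-1)-\sigma}-\tilde B\,(1-h)^{-\sigma}$ on $(0,1)$ and read off the result from its shape. Under the standing Assumption the exponent is negative: since $\gamma<1$ and $\sigma>1$ one has $\gamma(\sigma-1)<\sigma-1<\sigma$, hence $\gamma(\sigma-1)-\sigma<0$. Consequently $h\mapsto\kappa\,h^{\gamma(\sigma-1)-\sigma}$ is strictly decreasing, and $h\mapsto-\tilde B(1-h)^{-\sigma}$ is strictly decreasing as well, so $G$ is continuous and strictly decreasing on $(0,1)$. Moreover $G(0^+)=+\infty$ (the first term blows up while the second tends to $-\tilde B$) and $G(1^-)=-\infty$ (the second term blows up). The intermediate value theorem then yields a zero of $G$, and strict monotonicity makes it unique; this zero is the sought $h^*$.

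The hard part will be the bookkeeping of signs and exponents that converts the coupled stationary system into a monotone scalar equation: one must check both that $\kappa>0$ (using $\gamma<1$) and that $\gamma(\sigma-1)-\sigma<0$ (using $\sigma>1$ and $\gamma<1$), since precisely these two facts deliver the strict monotonicity of $G$ and the correct boundary limits. A secondary point worth verifying is that the root produced lies in the admissible window $(\underline\ell,\bar\ell)$ — and in particular below $1$, so that consumption of the second good $c_2=B(1-\ell)$ stays positive; the construction already confines $h^*$ to $(0,1)$, and requiring $(\underline\ell,\bar\ell)\subseteq(0,1)$ to contain this root is what makes the steady state interior and admissible.
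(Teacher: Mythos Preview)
Your proof is correct and follows essentially the same route as the paper: both reduce the stationary Maximum Principle system to the single scalar equation $\kappa\,(h^*)^{\gamma(\sigma-1)-\sigma}=\tilde B\,(1-h^*)^{-\sigma}$ and then argue existence and uniqueness via monotonicity on $(0,1)$. The only cosmetic difference is that the paper first takes the $1/\sigma$-th power and rearranges to compare an increasing power function (vanishing at $0$) against the decreasing line $1-h^*$, whereas you apply the intermediate value theorem directly to the difference $G$; the two arguments are equivalent.
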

\begin{proof}
At the steady state, the two first order conditions of \eqref{eq:maxprinc} become:
\begin{eqnarray}
	&& u_\ell(h^*)+v_{\ell}(h^*)+\mu^*\phi=0 \\
	&& \mu^*=\frac{u_h(h^*)}{\phi+\rho}<0
\end{eqnarray}
Substituting the latter into the first condition leads to
$$u_\ell(h^*)+v_{\ell}(h^*)+\frac{\phi}{\phi+\rho}u_h(h^*)=0$$
which rewrites as it follows when we consider the CES utility function:
$$(h^*)^{-\sigma(1-\gamma)-\gamma}-\tilde B(1-h^*)^{-\sigma}-\frac{\phi\gamma}{\rho+\phi}(h^*)^{-\sigma(1-\gamma)-\gamma}=0$$
or equivalently (rearranging the terms):
$$\left(\frac{\tilde B(\rho+\phi)}{\rho+\phi(1-\gamma)}\right)^{\frac1\sigma}(h^*)^{\frac{\sigma(1-\gamma)+\gamma}\sigma}=1-h^*$$
Notice that the LHS of the equation is a function of $h^*$ which is zero at $h^*=0$ and is monotonically increasing as long as $\frac{\sigma(1-\gamma)+\gamma}\sigma>0$ or equivalently $\sigma>-\frac\gamma{1-\gamma}$ which is always respected since $\gamma\in(0,1)$ and $\sigma$ is positive. On the other hand the RHS of the equation is a linear decreasing function having value 1 when $h^*$ is zero. Therefore it always exists a unique $h^*$ when the two functions intersect each other.
\end{proof}


\subsection{Dynamic Programming}
The value function of the problem is
\begin{equation}\label{valuefunction}
V(h_0)=\sup_{\ell(\cdot) \in \mathcal{A}(h_0)} \int_0^{+\infty}e^{-\rho t}U(\ell(t), B(1-\ell(t)),h(t))dt
\end{equation}
subject to the following statee equation
\begin{equation}\label{eq:state}
\dot h=\phi(\ell-h) \quad h(0)=h_0,
\end{equation}
where
$
\mathcal{A}(h_0)$ is defined in \eqref{eq: admissible_controls}.
The HJB equation associated to optimal control problem \eqref{optproblem} is
\begin{equation}\label{HJB}
\rho V=\max_{\ell \in [\underline \ell, \bar \ell]} H_{CV}(\ell,h,DV)
\end{equation}
where $H_{CV}(\ell,h,p)$ is defined in \eqref{eq:H_CV}.

 We are going to prove that the value function is a viscosity solution to \eqref{HJB}. First we recall the definition of viscosity solution.

\begin{definition}
A function $u \in C([0,+\infty))$ is a \textit{viscosity subsolution} of \eqref{HJB} if, for any $\phi \in C^1([0,+\infty))$
\begin{equation}\label{eq:subsol}
\rho V(h_0)\leq \max_{\ell \in [\underline \ell, \bar \ell]} H_{CV}(\ell,h_0,D\phi(h_0))
\end{equation}
at any local maximum point $h_0 \in [0,+\infty)$ of $u -\phi$. Similarly, $ u \in C^([0,+\infty))$ is a \textit{viscosity supersolution} of \eqref{HJB} if, for any $\phi \in C^1([0,+\infty))$
\begin{equation}\label{eq:supersol}
\rho V(h_0)\geq \max_{\ell \in [\underline \ell, \bar \ell]} H_{CV}(\ell,h_0,D\phi(h_0))
\end{equation}
at any local minimum point $h_0 \in [0,+\infty)$ of $V-\phi$. Finally $u$ is a \textit{viscosity solution} of \eqref{HJB} if it is simultaneously a viscosity sub- and supersolution.
\end{definition}

\begin{proposition}
Let $V$ be the value function defined in \eqref{valuefunction}. Then
\begin{itemize}
\item[(i)]  $V$ is negative, $ V(0^+)>-\infty$,  and $V$ is decreasing and continuous in $[0,+\infty)$.
\item[(ii)] $V$ is a viscosity solution in $(0,+\infty)$ of the HJB equation \eqref{HJB}.

\item[(iii)]  $V$ is locally Lipschitz in $(0,+\infty)$.
\item[(iv)] $V(+\infty)=-\infty.$
\end{itemize}
\end{proposition}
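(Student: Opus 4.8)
The plan is to establish the four items in the order (i), (iii), (ii), (iv), since the continuity needed for the viscosity characterisation in (ii) is most cleanly obtained from the local Lipschitz estimate of (iii). For item (i), negativity is immediate: $\sigma>1$ forces $U<0$, hence $V\le 0$. Monotonicity reproduces the argument of Lemma \ref{lem: prop_v_simple}: by linearity of \eqref{eq:state} one has $h^{h_1,\ell}(t)<h^{h_2,\ell}(t)$ whenever $h_1<h_2$, and since $u_h<0$ the value decreases. To obtain $V(0^+)>-\infty$ I would test the functional with the constant control $\ell\equiv\bar\ell$: then $h^{h_0,\bar\ell}(t)=e^{-\phi t}h_0+(1-e^{-\phi t})\bar\ell$ stays in a fixed compact subset of $[0,\infty)$, the term $v(\bar\ell)$ is a finite constant, and the exponent $\gamma(\sigma-1)>1$ guarantees that $h(t)^{\gamma(\sigma-1)}\to 0$ fast enough near $t=0$ for the integrand to be integrable uniformly as $h_0\to 0$, yielding a finite lower bound.

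For item (iii), fix a compact interval $[a,b]\subset(0,\infty)$ and take $h_1<h_2$ in it. Applying an $\varepsilon$-optimal control $\ell$ for $h_1$ as a suboptimal control for $h_2$, the $v$ contribution cancels because it is independent of $h$, so $V(h_1)-V(h_2)$ is controlled by $\int_0^\infty e^{-\rho t}\frac{\ell^{1-\sigma}}{1-\sigma}\big[(h^{h_1,\ell})^{\gamma(\sigma-1)}-(h^{h_2,\ell})^{\gamma(\sigma-1)}\big]dt+\varepsilon$. Two structural facts close the estimate: linearity gives $h^{h_2,\ell}(t)-h^{h_1,\ell}(t)=e^{-\phi t}(h_2-h_1)$, an extra decay; and on the compact range of the trajectories the map $h\mapsto h^{\gamma(\sigma-1)}$ is Lipschitz, since its derivative $\gamma(\sigma-1)h^{\gamma(\sigma-1)-1}$ is bounded there because $\gamma(\sigma-1)-1>0$. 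Integrating $e^{-(\rho+\phi)t}$ then produces a Lipschitz constant depending only on $[a,b]$. The factor $\ell^{1-\sigma}$ is the delicate point: it is bounded by $\underline\ell^{1-\sigma}$ when $\underline\ell>0$, while if $\underline\ell=0$ one must first argue that near-optimal controls stay bounded away from $0$ (as $\ell\to 0$ drives the first utility term to $-\infty$). I expect this uniform control of $\ell^{1-\sigma}$ to be the main obstacle of the whole proposition. Lipschitzness yields continuity on $(0,\infty)$, and continuity at $0$ follows by squeezing $V$ between the constant-control lower bound and the bound $V\le 0$, both continuous at $0$, together with monotonicity.

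For item (ii) I would first record the Dynamic Programming Principle for \eqref{valuefunction}, namely $V(h_0)=\sup_{\ell}\big[\int_0^\tau e^{-\rho t}U\,dt+e^{-\rho\tau}V(h(\tau))\big]$, and then run the standard viscosity argument on $(0,\infty)$, where $H_{CV}(\ell,h,p)$ of \eqref{eq:H_CV} is continuous in $(h,p)$. For the supersolution inequality \eqref{eq:supersol}, at a local minimum $h_0$ of $V-\phi$ I would insert a constant control into the DPP, use $V\ge\phi$ near $h_0$, divide by $\tau$ and let $\tau\to 0$ to get $\rho V(h_0)\ge H_{CV}(\ell,h_0,D\phi(h_0))$ for every $\ell$, hence for the maximum. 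For the subsolution inequality \eqref{eq:subsol}, I would argue by contradiction: if $\rho\phi(h_0)>\max_\ell H_{CV}(\ell,h_0,D\phi(h_0))+\delta$ at a local maximum of $V-\phi$, continuity propagates the strict inequality to a neighbourhood and to all $\ell$, so integrating $\frac{d}{dt}\big(e^{-\rho t}\phi(h(t))\big)$ along an arbitrary trajectory and invoking $V\le\phi$ and the DPP yields $V(h_0)\ge V(h_0)+c\delta$, a contradiction.

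Finally, item (iv) is the quickest. From \eqref{eq:state} and $\ell\ge\underline\ell\ge 0$ one has $h^{h_0,\ell}(t)\ge e^{-\phi t}h_0$ for every admissible $\ell$, so, using $v\le 0$ and $\ell\le\bar\ell$, the integrand satisfies $U\le\frac{\bar\ell^{1-\sigma}}{1-\sigma}e^{-\phi\gamma(\sigma-1)t}h_0^{\gamma(\sigma-1)}$. Integrating against $e^{-\rho t}$ gives $V(h_0)\le\frac{1}{\rho+\phi\gamma(\sigma-1)}\frac{\bar\ell^{1-\sigma}}{1-\sigma}h_0^{\gamma(\sigma-1)}$, which tends to $-\infty$ as $h_0\to\infty$ precisely because $\gamma(\sigma-1)>1$ and the constant prefactor is negative.
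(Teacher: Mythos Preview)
Your proposal is correct and follows essentially the same strategy as the paper: negativity and monotonicity from the sign and $h$-monotonicity of $U$, finiteness of $V(0^+)$ via a constant test control, local Lipschitz via an $\varepsilon$-optimal control together with the Lipschitz property of $h\mapsto h^{\gamma(\sigma-1)}$ (the paper phrases this through the Lagrange mean value theorem), the viscosity property from the DPP, and $V(+\infty)=-\infty$ from a uniform-in-$\ell$ upper bound on the trajectories. The only cosmetic differences are that you reorder (iii) before (ii), use the contradiction variant rather than the direct expansion for the subsolution inequality, and in (iv) invoke the elementary lower bound $h^{h_0,\ell}(t)\ge e^{-\phi t}h_0$ in place of the paper's comparison with $h^{h_0,\underline\ell}$; all of these are standard and interchangeable.
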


\begin{proof}
(i). Since $\sigma >1$, it is straightforward to see that $V<0$. Moreover
$$
V(0^+)\geq \int_0^{+\infty} e^{-\rho t}\left(\frac{\underline \ell^{1-\sigma}}{1-\sigma}e^{-\phi\gamma(\sigma-1)t}(e^{\phi t}-1)^{\gamma(\sigma-1)} +\tilde B\frac{(1-\bar \ell)^{1-\sigma}}{1-\sigma}\right)dt>-\infty
$$
from which we conclude the second inequality.

Note first that the set of control $\mathcal{A}(h_0)$ is independent of $h_0$, since
$$
h(t)=e^{-\phi t}\left[h_0+\phi\int_0^t\ell(s)e^{\phi s} ds\right]\geq 0
$$
since $h_0\geq 0, \phi>0$ and $\ell \in [\underline \ell, \bar \ell]$.

Then, it is easy to see that  $V(\cdot)$ is decreasing since the set of controls $\mathcal{A}(h_0)$ is independent of $h_0$ while the utility function decreases (since $\gamma>0$ and $\sigma >1$) in $h(t)$, hence in $h_0$ as the equation for $h(t)$ is linear. The continuity follows by sequences by using that $U(\ell(s), B(1-\ell(s)), \cdot )$ for $s>0$ is locally Lipschitz  in $(0,+\infty)$ since $\gamma(\sigma-1)\geq1$ and  the continuity of $h(t)$ for all $t>0$ with respect to the initial datum.

(ii). Note first that the state constraint can be easily proved to hold for every control $\ell$ since $\phi>0$ and $\underline \ell>0$. Then the fact that $V$ is a viscosity solution of the HJB equation \eqref{HJB} follows by standard arguments in viscosity theory (see for example \cite{BCD} Chapther III Proposition $2.8$). However, differently from the standard case (see \cite{BCD}) here $U(\ell(s), B(1-\ell(s)),\cdot)$ for $s>0$ is not uniformly continuous but just locally Lipschitz in $(0,+\infty)$ if $\gamma(\sigma-1)\geq1$. We give the proof for completeness.  For convenience of notation, in the following proof we will use the notation $h^{h_0,\ell}$ to denote the trajectory solution to \eqref{eq:state}. First note that the Dynamic Programming Principle (DPP) can be proved as in Proposition $2.5$ Chapter $3$ of \cite{BCD}, that is
$$
V(h_0)=\sup_{\ell(\cdot) \in \mathcal{A}(h_0)} \left \{\int_0^{t}U(\ell(s), B(1-\ell(s)), h^{h_0,\ell}(s))e^{-\rho s}ds+V(h^{h_0,\ell}(t)) e^{-\rho t}\right\}.
$$
Now we prove that the value function is a viscosity solution of \eqref{HJB}.  First we prove that $V$ is a subsolution.
 Let $\phi \in C^1([0,+\infty))$ and $h_0$ be a local maximum point of $V-\phi$, that is, for some $r>0$
\begin{equation}\label{eq:max}
V(h_0)-V(z)\geq \phi(h_0)-\phi(z), \quad \mbox{ for all } z \in B(h_0,r)\cap [0,+\infty).
\end{equation}

Then for each $\epsilon>0$ and $t>0$ by the inequality $\leq$ in the DPP, there exists $\tilde \ell \in \mathcal{A}(h_0)$ (depending on $\epsilon$ and $t$) such that
\begin{equation}\label{VDPP}
V(h_0)\leq \int_0^t U(\tilde \ell(s), B(1-\tilde \ell(s)),h^{h_0,\ell}(s))e^{-\rho s}ds+V(h^{h_0,\ell}(t))e^{-\rho t}+t\epsilon.
\end{equation}
Since $\gamma (\sigma-1)>1$ we have $U(\ell(s), B(1-\ell(s)), \cdot)$ is locally Lipschitz in $(0,+\infty)$ for any $s\geq 0$. For s enough small  we can suppose that $h^{h_0,\ell}(s) \in B(h_0,r)$.Then
\begin{eqnarray*}
|U(\tilde \ell(s), B(1-\tilde \ell(s)), h^{h_0,\ell}(s))-U(\tilde \ell(s), B(1-\tilde \ell(s)), h_0)|&\leq& C_r|h^{h_0,\ell}(s)-h_0|\\ &\leq & C_r r.
\end{eqnarray*}
Then the integral in the righthand side of \eqref{VDPP} can be rewritten as
$$
\int_0^tU(\tilde \ell(s), B(1-\tilde \ell(s)), h_0)e^{-\rho s}ds+o(t), \quad \mbox{ as } t \to 0
$$
where $o(t)$ indicates a function $g(t)$ such that $\lim_{t\to 0^+}\frac{|g(t)|}{t}=0$.
Then \eqref{eq:max} with $z=h^{h_0,\ell}(t)$ and \eqref{VDPP} give
\begin{equation}\label{eq:dpphjb}
\phi(h_0)-\phi(h^{h_0,\ell}(t))-\int_0^t U(\tilde \ell(s), B(1-\tilde \ell(s)), h_0)e^{-\rho s}ds+V(h^{h_0,\ell}(t))(1-e^{-\rho t})\leq t\epsilon+o(t)
\end{equation}
Moreover
\begin{eqnarray}\label{eq:estphi}
\phi(h_0)-\phi(h^{h_0,\tilde\ell}(t))\nonumber &=&-\int_0^t\frac{d}{ds}\phi(h^{h_0,\tilde\ell}(s)) ds\\ \nonumber&=& -\int_0^t D\phi (h^{h_0,\ell}(s))\cdot \phi (\tilde \ell(s)-h^{h_0,\tilde\ell}(s))ds\\ &=& -\int_0^t D\phi(h_0)\cdot \phi (\tilde \ell(s)-h^{h_0,\ell}(s))ds + o(t)
\end{eqnarray}
Plugging \eqref{eq:estphi} into \eqref{eq:dpphjb} and adding $\pm  \int_0^t U(\tilde \ell(s), B(1-\tilde \ell(s)), h_0)ds$ we get
\begin{multline*}
 \int_0^t \left\{-D\phi(h_0)\cdot \phi (\tilde \ell(s)-h^{h_0,\ell}(s))-U(\tilde \ell(s), B(1-\tilde \ell(s)), h_0)\right\}ds \\+\int_0^t U(\tilde \ell(s), B(1-\tilde \ell(s)), h_0)(1-e^{-\rho s})ds+V(h^{h_0,\ell}(t))(1-e^{-\rho t})\leq t\epsilon+o(t).
\end{multline*}
The term in brackets in the first integral is estimated from below by
$$
\min_{\ell \in [\underline \ell, \bar \ell]}\left\{-D\phi(h_0)\cdot \phi (\ell-h_0)-U(\ell, B(1-\ell), h_0)\right\}
$$
and the second integral is $o(t)$, so we can divide by $t$ and pass to the limit to get
$$
\min_{\ell \in [\underline \ell, \bar \ell]} \left \{-D\phi(h_0)\cdot \phi (\ell-h_0)-U(\ell, B(1-\ell), h_0)\right\}+\rho V(h_0)\leq \epsilon
$$
where we have used the continuity of $V$ at $h_0$ and of $h^{h_0,\ell}$ at $0$. Note that the previous inequality is equivalent to \eqref{eq:subsol}. Since $\epsilon$ is arbitrary, the proof that $V$ is a subsolution is complete.

Now we prove that $V$ is a supersolution to \eqref{HJB}. Let $\phi \in C^1([0, + \infty))$ and $h_0$ be a local minimum point of $V-\phi$, that is, for some $r>0$ ,
$V(h_0)-V(z)\leq \phi(h_0)-\phi(z)$ for all $z \in B(h_0,r)$. Fix an arbitrary $\ell \in [\underline \ell, \bar \ell]$ and let $h^{h_0,\ell}(t)$ be the solution corresponding to the constant control $\ell(t)=\ell$ for all $t$. For $t$ small enough $h^{h_0,\ell}(t) \in B(h_0,r)$ and then
$$
\phi(h_0)-\phi(h^{h_0,\ell}(t))\geq V(h_0)-V(h^{h_0,\ell}(t)) \quad \mbox{ for all } 0\leq t \leq t_0.
$$
By using the inequality "$\geq$" in the DPP, we get
$$
\phi(h_0)-\phi(h^{h_0,\ell}(t))\geq \int_0^{t}U(\ell, B(1-\ell), h^{h_0,\ell}(s))e^{-\rho s}ds+V(h^{h_0,\ell}(s)) (e^{-\rho t}-1).
$$
Therefore dividing by $t>0$ and letting $t\to 0$, we obtain, by the differentiability of $\phi$ and the continuity of $V, h^{h_0,\ell}$ and $U$
$$
-D\phi(h_0)\cdot (h^{h_0,\ell})'(0)=-D\phi(h_0) \cdot \phi (\ell-h_0)\geq U(\ell, B(1-\ell), h_0)-\rho V(h_0).
$$
Since $\ell \in [\underline \ell, \bar \ell]$ is arbitrary, we have proved that
$$
\rho V(h_0)+\min_{\ell \in [\underline \ell, \bar \ell]}\left\{-\phi(\ell-h_0)\cdot D\phi(h_0)-U(\ell, B(1-\ell), h_0)\right\}\geq 0,
$$
and being the previous inequality equivalent to \eqref{eq:supersol}, we get that $V$ is a viscosity supersolution to \eqref{HJB}.

(iii) Let $h_1>h_2$. For every $\delta>0$,  there exists $\ell_2(\cdot)\in \mathcal{A}$ such that
\begin{eqnarray*}
0\leq V(h_2)-V(h_1)&\leq& \int_0^{+\infty}e^{-\rho t}\left[u(h_2(t), \ell_2(t))-u(h_1(t), \ell_2(t))\right]dt+\delta\\ &=&\int_0^{+\infty}e^{-\rho t}\frac{\ell_2(t)^{1-\sigma}}{1-\sigma}\left[h_2(t)^{\gamma(\sigma-1)}-h_1(t)^{\gamma(\sigma-1)}\right]dt+\delta,
\end{eqnarray*}
where for convenience of notation  we denote by $h_1(\cdot), h_2(\cdot)$ the trajectories $h^{h_1, \ell_2}(\cdot), h^{h_2, \ell_2}(\cdot)$, respectively.
Let $$
a(t)=\int_0^te^{\phi s}\ell_2(s)ds.$$

Then by the Lagrange theorem there exists $\xi \in (h_2,h_1)$ such that
\begin{eqnarray*}
h_1(t)^{\gamma(\sigma-1)}-h_2(t)^{\gamma(\sigma-1)}&=& \left(e^{-\phi t}\left[h_1+a(t)\right]\right)^{\gamma(\sigma-1)}-\left(e^{-\phi t}\left[h_2+a(t) \right]\right)^{\gamma(\sigma-1)}\\ &=& e^{\phi \gamma(1-\sigma) t}(\xi+a(t))^{\gamma(\sigma-1)-1}(-\gamma(1-\sigma))(h_1-h_2).
\end{eqnarray*}
Then we have
\begin{eqnarray*}
h_2(t)^{\gamma(\sigma-1)}-h_1(t)^{\gamma(\sigma-1)}&\geq &e^{\phi \gamma(1-\sigma) t} (\xi+a(t))^{\gamma(\sigma-1)-1}\gamma(1-\sigma)(h_1-h_2)\\& \geq &e^{\phi \gamma(1-\sigma) t} (h_1+a(t))^{\gamma(\sigma-1)-1}\gamma(1-\sigma)(h_1-h_2)
\end{eqnarray*}
and then
$$
\frac{h_2(t)^{\gamma(\sigma-1)}-h_1(t)^{\gamma(\sigma-1)}}{1-\sigma}\leq e^{\phi \gamma(1-\sigma) t} (h_1+a(t))^{\gamma(\sigma-1)-1}\gamma(h_1-h_2)
$$
Note
$$
h_1+a(t)\leq h_1-\frac{\bar \ell}{\phi}+\frac{\bar \ell}{\phi}e^{\phi t}\leq2\max\left\{h_1-\frac{\bar \ell}{\phi}, \frac{\bar \ell}{\phi}\right\}e^{\phi t}:=ce^{\phi t}.
$$
Note that $c$ depends on $h_1$.
Then
\begin{eqnarray}\label{eq:disV}
0\leq V(h_2)-V(h_1)\leq \left(\underline \ell^{1-\sigma}\gamma c^{\gamma(\sigma-1)-1} \int_0^{+\infty}e^{-(\rho +\phi)t} dt\right)(h_1-h_2)+\delta
\end{eqnarray}

Since \eqref{eq:disV} holds for every $\delta>0$, we have
$$
0\leq \frac{V(h_2)-V(h_1)}{h_1-h_2}\leq \underline \ell^{1-\sigma}\gamma (\rho + \phi)^{-1}  c^{\gamma(\sigma-1)-1},
$$
completing the proof.

(iv) We observe that $u_1, u_2$ are increasing functions and $v$ is a decreasing function. So
\[u_1(h^{h_0,\underline{\ell}}_t)\leq u_1(h^{h_0,\ell}_t)\leq u_1(h^{h_0,\bar{\ell}}_t), \quad u_2(\underline{\ell})\leq u_2(\ell)\leq u_2(\bar{\ell})\]
\[v(\bar{\ell})\leq v(\ell)\leq v(\underline{\ell})\]
and then,
\[u_2(\ell)u_1(h^{h_0,\underline{\ell}}_t)\leq u_2(\bar{\ell})u_1(h^{h_0,\underline{\ell}}_t)\]
We can estimate the value function,
\[V(h_0)\leq \int_0^\infty e^{-\rho t} u_2(\bar{\ell})u_1(h^{h_0,\underline{\ell}}_t) dt+v(\underline{\ell})=\int_0^\infty e^{-\rho t} u_2(\bar{\ell})(e^{-\phi t}(h_0-\underline{\ell})+\underline{\ell}))^{-\gamma(1-\sigma)} dt+v(\underline{\ell}).\]
The right hand side converges to $+\infty$ and we conclude that $V(+\infty)=-\infty$.
\end{proof}

\section{Conclusion}
In this paper we have studied a problem for a social planner dealing with two good's consumptions, one of the two related to habits stock, and where the utility function is not jointly concave in the state and controls variable. First, the problem is studied when the utility on the consumption related to the good without habits is neglected. The complete problem is then studied by a dynamic programming approach: we have proved that the value function is a viscosity solution of the HJB equation and some qualitative properties on the value function are derived. These results constitute a a preliminary and necessary step for the more elaborate model where the lockdown (of random arrival and length) is investigated.

\newpage
\bibliographystyle{alpha}
\bibliography{biblio}
\end{document}